\newtheorem{thm}[subsection]{Theorem}
\newtheorem{defn}[subsection]{Definition}
\newtheorem{lemma}[subsection]{Lemma}
\newtheorem{ques}[subsection]{Problem}
\theoremstyle{definition}  
\newcommand{\F}		    {\mathbb{F}}
\newcommand{\C}         {\mathbb{C}}
\DeclareMathOperator{\Spec}{Spec}
\newcommand{\ol}{\overline}
\newcommand{\anglb}[1]{{{\langle}#1{\rangle}}}
\numberwithin{equation}{subsection}
\begin{document}

\title{When is a fourfold Massey product defined?}

\author{Daniel C.\ Isaksen}
\address{Department of Mathematics\\ Wayne State University\\
Detroit, MI 48202}
\email{isaksen@math.wayne.edu}
\thanks{The author was supported by NSF grant DMS-1202213.}

\subjclass[2000]{55S30}

\keywords{Massey product}

\begin{abstract}
We define a new invariant in the homology of a differential
graded algebra.  This invariant is the obstruction to 
defining a fourfold Massey product.
\end{abstract}

\maketitle

\section{Introduction}

Massey products and Toda brackets are an essential tool for a detailed
understanding of the cohomology of the Steenrod algebra and stable homotopy groups
of spheres (see, for example, \cite{MT} \cite{BMT} \cite{BJM}).
The standard references on Massey products, such as \cite{K} \cite{M},
typically assume that brackets are strictly defined, i.e., that the subbrackets
have no indeterminacy.
We have found in our own work on motivic stable homotopy groups
\cite{I1} \cite{I2} 
that strictly defined brackets are not always general enough.

This note addresses a subtlety with the definition of fourfold Massey products,
which arises when both of the threefold subbrackets have indeterminacy.
We will define a new invariant 
(see Definition \ref{defn:coindet})
of the homology of a differential graded algebra.
Our main result (see Theorem \ref{thm:main}) is that this invariant
is the obstruction to defining a fourfold bracket whose threefold subbrackets
contain zero.

We work with a differential graded $\F_2$-algebra $A$
whose homology is $H$.
The reader who is interested in other characteristics 
can insert appropriate signs.  We suppress the grading of $A$ because it plays
no essential role here.
In general, $A$ need not be commutative.

The symbols $a_i$ always represent cycles, and the products
$a_i a_{i+1}$ are always assumed to be boundaries.  In other words,
all threefold brackets are assumed to be defined.
For any cycle $x$ in $A$, we write $\ol{x}$ for the element of $H$ 
that is represented by $x$.

\section{The problem}

Let us first recall how to compute a fourfold Massey product
$\anglb{\ol{a_0}, \ol{a_1}, \ol{a_2}, \ol{a_3}}$.
First choose elements
$a_{01}$, $a_{12}$, and $a_{23}$ such that
$d (a_{01}) = a_0 a_1$, 
$d (a_{12}) = a_1 a_2$, and
$d (a_{23}) = a_2 a_3$.
Next, choose elements $a_{02}$ and $a_{13}$ such that
$d(a_{02}) = a_0 a_{12} + a_{01} a_2$ and
$d(a_{13}) = a_1 a_{23} + a_{12} a_3$.
The bracket $\anglb{\ol{a_0}, \ol{a_1}, \ol{a_2}, \ol{a_3}}$ 
is the subset of $H$ consisting
of all elements of the form
\[
\ol{a_0 a_{13} + a_{01} a_{23} + a_{02} a_3}.
\]

There is a subtlety that arises if $\anglb{\ol{a_0}, \ol{a_1}, \ol{a_2}}$ or
$\anglb{\ol{a_1}, \ol{a_2}, \ol{a_3}}$ has indeterminacy.
In this case, one must be careful to 
choose $a_{01}$, $a_{12}$, and $a_{23}$ in such a way
that $a_0 a_{12} + a_{01} a_2$ and $a_1 a_{23} + a_{12} a_3$ are boundaries.
The well-known Lemma \ref{lem:half-strict} addresses a simple case of this
phenomenon.

\begin{lemma}
\label{lem:half-strict}
Suppose that both 
$\anglb{\ol{a_0}, \ol{a_1}, \ol{a_2}}$ and
$\anglb{\ol{a_1}, \ol{a_2}, \ol{a_3}}$ contain zero, and at least
one of the brackets is strictly zero.
Then
$\anglb{\ol{a_0}, \ol{a_1}, \ol{a_2}, \ol{a_3}}$ is defined.
\end{lemma}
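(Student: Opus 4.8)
The plan is to unwind what the hypotheses ``contains zero'' and ``strictly zero'' mean for a threefold bracket, and then to exploit the fact that, when one of the two subbrackets is strictly zero, that bracket imposes no constraint at all on the auxiliary element $a_{12}$ which is shared by the two subbrackets.

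First I would record precisely what the hypotheses give. By definition, $\anglb{\ol{a_0}, \ol{a_1}, \ol{a_2}}$ is the set of classes $\ol{a_0 a_{12} + a_{01} a_2}$ as $a_{01}$ and $a_{12}$ range over all elements with $d(a_{01}) = a_0 a_1$ and $d(a_{12}) = a_1 a_2$. Thus ``$\anglb{\ol{a_0}, \ol{a_1}, \ol{a_2}}$ contains zero'' means there is at least one such pair $(a_{01}, a_{12})$ for which $a_0 a_{12} + a_{01} a_2$ is a boundary, while ``$\anglb{\ol{a_0}, \ol{a_1}, \ol{a_2}}$ is strictly zero'' means that $a_0 a_{12} + a_{01} a_2$ is a boundary for \emph{every} such pair. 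The same remarks apply to $\anglb{\ol{a_1}, \ol{a_2}, \ol{a_3}}$ with auxiliary elements $a_{12}$ and $a_{23}$.

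Next I would split into the two symmetric cases according to which subbracket is strictly zero, writing out only the case in which $\anglb{\ol{a_1}, \ol{a_2}, \ol{a_3}}$ is strictly zero. Since $\anglb{\ol{a_0}, \ol{a_1}, \ol{a_2}}$ contains zero, choose $a_{01}$ and $a_{12}$ with $d(a_{01}) = a_0 a_1$, $d(a_{12}) = a_1 a_2$, and $a_0 a_{12} + a_{01} a_2$ a boundary; then pick any $a_{23}$ with $d(a_{23}) = a_2 a_3$. Because $\anglb{\ol{a_1}, \ol{a_2}, \ol{a_3}}$ is strictly zero, the element $a_1 a_{23} + a_{12} a_3$ is automatically a boundary for this (indeed any) choice of $a_{12}$ and $a_{23}$. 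Hence both $a_0 a_{12} + a_{01} a_2$ and $a_1 a_{23} + a_{12} a_3$ are boundaries, so $a_{02}$ and $a_{13}$ may be chosen as in the recipe recalled above, and $\anglb{\ol{a_0}, \ol{a_1}, \ol{a_2}, \ol{a_3}}$ is defined. The case in which $\anglb{\ol{a_0}, \ol{a_1}, \ol{a_2}}$ is strictly zero is identical with the roles of the two ends reversed: pick $a_{12}$ and $a_{23}$ realizing zero in $\anglb{\ol{a_1}, \ol{a_2}, \ol{a_3}}$ and then any $a_{01}$.

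The only point requiring care — and it is the whole content of the lemma — is the observation that the two subbrackets interact solely through the shared auxiliary element $a_{12}$: a bare ``contains zero'' hypothesis on both brackets would in general force a single choice of $a_{12}$ to serve two possibly incompatible purposes, whereas strict vanishing of one bracket removes that bracket's demand on $a_{12}$ altogether, leaving us free to make the choice dictated by the other. I expect no genuine obstacle beyond stating the conventions cleanly.
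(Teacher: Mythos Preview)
Your argument is correct and is essentially identical to the paper's: use the ``contains zero'' hypothesis to fix $a_{12}$ (together with the adjacent auxiliary) for the non-strict subbracket, and then invoke strict vanishing of the other subbracket to guarantee the remaining expression is a boundary for any choice of the last auxiliary. The only cosmetic difference is that the paper spells out the case where $\anglb{\ol{a_0}, \ol{a_1}, \ol{a_2}}$ is strictly zero while you spell out the other one.
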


\begin{proof}
Suppose that $\anglb{\ol{a_0}, \ol{a_1}, \ol{a_2}}$ is strictly zero.
Choose $a_{12}$ and $a_{23}$ such that $a_1 a_{23} + a_{12} a_3$
is a boundary.  Then any choice of $a_{01}$ makes
$a_0 a_{12} + a_{01} a_2$ into a boundary, 
since $\anglb{\ol{a_0}, \ol{a_1}, \ol{a_2}}$ is strictly zero.

The same argument applies when
$\anglb{\ol{a_1}, \ol{a_2}, \ol{a_3}}$ is strictly zero.
\end{proof}

If
both $\anglb{\ol{a_0}, \ol{a_1}, \ol{a_2}}$ and $\anglb{\ol{a_1}, \ol{a_2}, \ol{a_3}}$
have indeterminacies, it may be impossible to choose
$a_{01}$, $a_{12}$, and $a_{23}$ such that both
$a_0 a_{12} + a_{01} a_2$ and $a_1 a_{23} + a_{12} a_3$ are boundaries
simultaneously.
The problem is that there are two constraints on $a_{12}$, and it may
not be possible to satisfy both constraints.

\section{Coindeterminacy}

Suppose that $\anglb{\ol{a_0}, \ol{a_1}, \ol{a_2}}$ and 
$\anglb{\ol{a_1}, \ol{a_2}, \ol{a_3}}$ both contain zero, but both
may possibly have non-zero indeterminacy.

\begin{defn}
\label{defn:coindet}
The coindeterminacy
of the brackets
$\anglb{\ol{a_0}, \ol{a_1}, \ol{a_2}}$ and 
$\anglb{\ol{a_1}, \ol{a_2}, \ol{a_3}}$ is 
the subset of $H$ consisting of all elements
of the form $\ol{x+y}$,
where $x$ ranges over all elements of $A$ such that $d(x) = a_1 a_2$
and $a_0 x + z a_2$ is a boundary for some $z$ with $d(z) = a_0 a_1$;
and $y$ ranges over all elements of $A$ such that $d(y) = a_1 a_2$
and $a_1 w + y a_3$ is a boundary for some $w$ with $d(w) = a_2 a_3$.
\end{defn}

In other words, $x$ ranges over all possible choices of 
$a_{12}$ that can be used to construct zero in $\anglb{\ol{a_0}, \ol{a_1}, \ol{a_2}}$.
Similarly, $y$ ranges over all possible choices of
$a_{12}$ that can be used to construct zero in $\anglb{\ol{a_1}, \ol{a_2}, \ol{a_3}}$.

The careful reader can verify that the coindeterminacy is well-defined in $H$, i.e.,
\begin{enumerate}
\item
if $x$ and $y$ satisfy the conditions of Definition \ref{defn:coindet},
then $x+y$ is a cycle.
\item
if $x$ and $y$ satisfy the conditions of Definition \ref{defn:coindet}
and $b$ is a boundary, then
then $x+b$ and $y$ satisfy the conditions of Definition \ref{defn:coindet}.
\item
if $a_i'$ is homologous to $a_i$ for each $i$, then
the coindeterminacy of
$\anglb{\ol{a'_0}, \ol{a'_1}, \ol{a'_2}}$ and 
$\anglb{\ol{a'_1}, \ol{a'_2}, \ol{a'_3}}$ is the same as
the coindeterminacy of 
$\anglb{\ol{a_0}, \ol{a_1}, \ol{a_2}}$ and 
$\anglb{\ol{a_1}, \ol{a_2}, \ol{a_3}}$.
\end{enumerate}

\begin{defn}
Let $\ol{a}$ and $\ol{b}$ be elements of $H$.
Then $(\ol{a} \backslash \backslash \ol{b})$ 
is the additive subgroup of $H$ consisting of all elements $\ol{x}$
such that $\ol{a} \ol{x} = \ol{z} \ol{b}$ for some $\ol{z}$ in $H$,
and $(\ol{a} // \ol{b})$ is the additive subgroup of $H$
consisting of all elements $\ol{x}$ such that $\ol{a} \ol{z} = \ol{x} \ol{b}$ 
for some $\ol{z}$ in $H$.
\end{defn}

\begin{lemma}
The coindeterminacy of $\anglb{\ol{a_0}, \ol{a_1}, \ol{a_2}}$ and
$\anglb{\ol{a_1}, \ol{a_2}, \ol{a_3}}$ is a coset with respect to 
$(\ol{a_0} \backslash \backslash \ol{a_2}) + (\ol{a_1} // \ol{a_3})$.
\end{lemma}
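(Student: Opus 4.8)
The plan is to fix one admissible choice of $a_{12}$ on each side and then to show that varying these choices moves the resulting homology class through a translate of $(\ol{a_0} \backslash\backslash \ol{a_2}) + (\ol{a_1} // \ol{a_3})$.

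First I would name the two sets of admissible choices. Let $X$ be the set of all $x \in A$ with $d(x) = a_1 a_2$ for which $a_0 x + z a_2$ is a boundary for some $z$ with $d(z) = a_0 a_1$, and let $Y$ be the set of all $y \in A$ with $d(y) = a_1 a_2$ for which $a_1 w + y a_3$ is a boundary for some $w$ with $d(w) = a_2 a_3$; by Definition~\ref{defn:coindet} the coindeterminacy is $\{\ol{x + y} : x \in X,\ y \in Y\}$. The hypothesis that $\anglb{\ol{a_0}, \ol{a_1}, \ol{a_2}}$ and $\anglb{\ol{a_1}, \ol{a_2}, \ol{a_3}}$ both contain zero says precisely that $X$ and $Y$ are nonempty, so fix $x_0 \in X$ with witness $z_0$ and $y_0 \in Y$ with witness $w_0$. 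Over $\F_2$, the elements $x_0 + y_0$, $x - x_0$ (for $x \in X$), and $y - y_0$ (for $y \in Y$) are all cycles, since $x_0$, $y_0$, $x$, $y$ all have differential $a_1 a_2$; and since the map $c \mapsto \ol{c}$ from cycles to $H$ is additive,
\[
\ol{x + y} \;=\; \ol{x_0 + y_0} + \ol{x - x_0} + \ol{y - y_0}.
\]
Hence the coindeterminacy equals $\ol{x_0 + y_0} + L + R$, where $L = \{\ol{x - x_0} : x \in X\}$ and $R = \{\ol{y - y_0} : y \in Y\}$.

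It then remains to identify $L$ with $(\ol{a_0} \backslash\backslash \ol{a_2})$ and $R$ with $(\ol{a_1} // \ol{a_3})$; since these are additive subgroups of $H$, and hence so is their sum, this exhibits the coindeterminacy as a coset of $(\ol{a_0} \backslash\backslash \ol{a_2}) + (\ol{a_1} // \ol{a_3})$. For $L$: given $x \in X$ with witness $z$, the elements $x - x_0$ and $z - z_0$ are cycles and $a_0(x - x_0) + (z - z_0) a_2 = (a_0 x + z a_2) + (a_0 x_0 + z_0 a_2)$ is a sum of two boundaries, so $\ol{a_0}\,\ol{x - x_0} = \ol{z - z_0}\,\ol{a_2}$ in $H$ and $\ol{x - x_0} \in (\ol{a_0} \backslash\backslash \ol{a_2})$; conversely, given $\ol{u} \in (\ol{a_0} \backslash\backslash \ol{a_2})$ with $\ol{a_0}\,\ol{u} = \ol{v}\,\ol{a_2}$, choose cycle representatives $u$ and $v$, note that $a_0 u + v a_2$ is a cycle representing $0$ and hence a boundary, so that $x_0 + u \in X$ with witness $z_0 + v$, whence $\ol{u} = \ol{(x_0 + u) - x_0} \in L$. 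The identification of $R$ with $(\ol{a_1} // \ol{a_3})$ is entirely parallel, the only difference being that the relevant condition involves $y a_3$ on the right rather than $a_0 x$ on the left, which is exactly why $(\ol{a_1} // \ol{a_3})$ occurs in place of $(\ol{a_1} \backslash\backslash \ol{a_3})$.

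The step I expect to require the most care is the second inclusion in $L = (\ol{a_0} \backslash\backslash \ol{a_2})$: one must lift a homology class $\ol{u}$ together with the auxiliary class $\ol{v}$ satisfying $\ol{a_0}\,\ol{u} = \ol{v}\,\ol{a_2}$ to genuine cycles and then verify that the resulting chain-level relation ($a_0 u + v a_2$ is a boundary) combines additively with the chain-level relation defining $x_0 \in X$ to show $x_0 + u \in X$. Everything else reduces to routine manipulation of differentials, with no signs to track since we work over $\F_2$, as noted in the introduction.
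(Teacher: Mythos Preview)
Your proof is correct and follows essentially the same approach as the paper: both verify that the coindeterminacy is closed under translation by $(\ol{a_0}\backslash\backslash\ol{a_2}) + (\ol{a_1}//\ol{a_3})$ and that any two elements differ by something in this subgroup, with the same underlying chain-level computations. The only organizational difference is that you fix base points $x_0,y_0$ and identify the sets $L=\{\ol{x-x_0}\}$ and $R=\{\ol{y-y_0}\}$ with the two subgroups separately, whereas the paper phrases the same calculation directly as a difference-and-translate argument; this buys you a slightly cleaner statement (coindeterminacy $=\ol{x_0+y_0}+(\ol{a_0}\backslash\backslash\ol{a_2})+(\ol{a_1}//\ol{a_3})$) but is not a genuinely different route.
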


One possible name for
$(\ol{a_0} \backslash \backslash \ol{a_2}) + (\ol{a_1} // \ol{a_3})$ is
the ``indeterminacy of the coindeterminacy".

\begin{proof}
Let $x+y$ and $x'+y'$ represent elements of the coindeterminacy.
We will consider $(x+y) + (x'+y') = (x+x') + (y+y')$.

The element $x+x'$ is a cycle.  There exist elements $z$ and $z'$
such that
$a_0 x + z a_2$ and $a_0 x' + z' a_2$ are boundaries.
Therefore, $a_0 (x+x')$ is homologous to $(z+z') a_2$.
This shows that $\ol{x+x'}$ belongs to $(\ol{a_0} \backslash \backslash \ol{a_2})$.
Similarly, $\ol{y+y'}$ belongs to $(\ol{a_1} // \ol{a_3})$.

On the other hand, let $\ol{x+y}$ be an element of the coindeterminacy,
and let $\ol{c}$ be an element of 
$(\ol{a_0} \backslash \backslash \ol{a_2})$.  Choose a cycle $e$ such that
$\ol{a_0} \ol{c} = \ol{e} \ol{a_2}$.  
There exists $z$ in $A$ such that $a_0 x+ z a_2$ is a boundary.
Then $a_0 (x + c) + (z + e)a_2$ is also a boundary.
This shows that $\ol{(x+c) + y}$ also belongs to the coindeterminacy.
Similarly, if $\ol{c}$ in an element of $(\ol{a_1} // \ol{a_3})$,
then $\ol{x + (y + c)}$ also belongs to the coindeterminacy.
\end{proof}

\begin{thm}
\label{thm:main}
Suppose that 
$\anglb{\ol{a_0}, \ol{a_1}, \ol{a_2}}$ and $\anglb{\ol{a_1}, \ol{a_2}, \ol{a_3}}$
both contain zero but possibly have non-zero indeterminacy.
The fourfold bracket $\anglb{\ol{a_0}, \ol{a_1}, \ol{a_2}, \ol{a_3}}$
is defined if and only if zero is contained in
the coindeterminacy of $\anglb{\ol{a_0}, \ol{a_1}, \ol{a_2}}$ and 
$\anglb{\ol{a_1}, \ol{a_2}, \ol{a_3}}$.
\end{thm}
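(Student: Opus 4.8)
The plan is to reduce both halves of the equivalence to a single concrete condition. By the recipe for the fourfold bracket recalled above, $\anglb{\ol{a_0}, \ol{a_1}, \ol{a_2}, \ol{a_3}}$ is defined precisely when there exist $a_{01}$, $a_{12}$, $a_{23}$ with $d(a_{01}) = a_0 a_1$, $d(a_{12}) = a_1 a_2$, $d(a_{23}) = a_2 a_3$ such that $a_0 a_{12} + a_{01} a_2$ \emph{and} $a_1 a_{23} + a_{12} a_3$ are both boundaries. So I would rephrase the theorem as: such a simultaneous choice of $a_{01}, a_{12}, a_{23}$ exists if and only if some choice of $a_{12}$ admissible ``on the left'' (i.e.\ completing to zero in $\anglb{\ol{a_0}, \ol{a_1}, \ol{a_2}}$, together with a witness $z = a_{01}$) is homologous to some choice of $a_{12}$ admissible ``on the right.'' In the language of Definition \ref{defn:coindet}, the latter condition says exactly that $x + y$ is a boundary for some admissible pair $x$, $y$, that is, that $\ol{0} = \ol{x+y}$ lies in the coindeterminacy. (The hypothesis that both threefold subbrackets contain zero is what makes admissible choices of $x$ and of $y$ exist, so the coindeterminacy is nonempty and the condition is not vacuous.)

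For the forward implication I would take the $a_{01}$, $a_{12}$, $a_{23}$ supplied by the assumption that the fourfold bracket is defined, and set $x = y = a_{12}$, $z = a_{01}$, $w = a_{23}$. Then $x$ satisfies the left-hand conditions of Definition \ref{defn:coindet}, $y$ satisfies the right-hand conditions, and $x + y = 0$ since we work over $\F_2$; hence $\ol{0}$ lies in the coindeterminacy. (In odd characteristic one would take $y = -x$ instead, which is one place appropriate signs would be needed.)

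The content is in the reverse implication. Suppose $x$, $y$, $z$, $w$ are as in Definition \ref{defn:coindet} and $x + y = d(c)$ for some $c$. I claim the single element $a_{12} := x$, together with $a_{01} := z$ and $a_{23} := w$, realizes the fourfold bracket. Indeed $a_0 a_{12} + a_{01} a_2 = a_0 x + z a_2$ is a boundary by left-admissibility of $x$. For the other constraint, rewrite $a_1 a_{23} + a_{12} a_3 = a_1 w + x a_3 = (a_1 w + y a_3) + (x + y) a_3$; here $a_1 w + y a_3$ is a boundary by right-admissibility of $y$, and $(x+y) a_3 = d(c) a_3 = d(c a_3)$ because $a_3$ is a cycle, so the whole expression is a boundary. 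This produces the required simultaneous choice, so the fourfold bracket is defined.

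The one step I would flag as the crux is exactly this substitution in the reverse direction: one must know that replacing the right-admissible witness $y$ by the homologous element $x$ keeps it right-admissible, and this rests entirely on $d(a_3) = 0$, which forces $d(c a_3) = d(c) a_3$ with no Leibniz correction term. (This is the same mechanism underlying well-definedness remark (2) following Definition \ref{defn:coindet}.) Everything else is unwinding the definitions, and the characteristic-$2$ hypothesis makes the forward direction essentially automatic.
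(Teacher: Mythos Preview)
Your proof is correct and follows the same approach as the paper's: both directions set $x=y=a_{12}$ in the forward direction and take $a_{12}=x$, $a_{01}=z$, $a_{23}=w$ in the reverse. Your reverse direction is in fact more explicit than the paper's, which simply writes ``we have $x=y$'' (tacitly invoking remark~(2) after Definition~\ref{defn:coindet} to replace $y$ by the homologous $x$) and omits the verification you spell out with $d(c\,a_3)$.
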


\begin{proof}
Suppose that $\anglb{\ol{a_0}, \ol{a_1}, \ol{a_2}, \ol{a_3}}$ is defined.
There are elements $a_{01}$, $a_{12}$, and $a_{23}$ such that
$a_0 a_{12} + a_{01} a_2$ and
$a_1 a_{23} + a_{12} a_3$ are boundaries.
Then $0 = a_{12} + a_{12}$ is an element of the coindeterminacy.

Suppose that zero belongs to the coindeterminacy.
In the notation from Definition \ref{defn:coindet},
we have $x = y$.  Let $a_{01}$, $a_{12}$, and $a_{23}$ be
$z$, $x$, and $w$ respectively.
\end{proof}

\section{An Example}

\begin{defn}
Let $A$ be the differential graded algebra whose underlying
algebra is a commutative polynomial algebra on the generators listed in the
table.
The differential on $A$ is defined on generators as below, and then extended
to all of $A$ via the Leibniz rule.

\begin{center}
\begin{tabular}{ll}
$x$ & $d(x)$ \\
\hline
$a_0$ & $0$ \\
$a_1$ & $0$ \\
$a_2$ & $0$ \\
$a_3$ & $0$ \\
$a_{01}$ & $a_0 a_1$ \\
$a_{12}$ & $a_1 a_2$ \\
$a_{23}$ & $a_2 a_3$ \\
$c$ & $0$ \\
$a_{02}$ & $a_0 a_{12} + a_{01} a_2$ \\
$a_{13}$ & $a_1 a_{23} + a_{12} a_3$ \\
\hline
\end{tabular}
\end{center}
\end{defn}

Note that the indeterminacy of the subbracket $\anglb{\ol{a_0}, \ol{a_1}, \ol{a_2}}$
contains
$\ol{a_0} \ol{c}$, and the indeterminacy of the subbracket 
$\anglb{\ol{a_1}, \ol{a_2}, \ol{a_3}}$
contains $\ol{c} \ol{a_3}$.
Nevertheless, the fourfold bracket $\anglb{\ol{a_0}, \ol{a_1}, \ol{a_2}, \ol{a_3}}$
is defined
because the coindeterminacy contains zero.

\begin{defn}
Let $A'$ be the differential graded algebra whose underlying algebra
is the same as the underlying algebra of $A$.
The differential on $A'$ is the same as on $A$, except that
$d(a_{13}) = a_1 a_{23} + (a_{12} + c) a_3$.
\end{defn}

The homologies of $A$ and of $A'$ are quite similar.  They are isomorphic as
rings, and they share the same threefold Massey product structure.  However,
the bracket $\anglb{\ol{a_0}, \ol{a_1}, \ol{a_2}, \ol{a_3}}$ is not well-defined in
the homology of $A'$
because the coindeterminacy is a 
non-zero coset of $\ol{c} = \ol{a_{12} + (a_{12} + c)}$.

Therefore, coindeterminacy detects that
$A$ and $A'$ are not weakly equivalent differential graded algebras.

\section{Next steps}

We leave unanswered a number of interesting and accessible problems.

\begin{ques}
Find examples of the following phenomena in the cohomology of the Steenrod algebra:
\begin{enumerate}
\item
A fourfold bracket
that is not defined because its coindeterminacy does not contain zero.
\item
A fourfold bracket
that is defined because
the coindeterminacy contains zero, even though
both threefold brackets have non-zero indeterminacy.
\end{enumerate}
\end{ques}

\begin{ques}
Extend these ideas to higher order Massey products.
\end{ques}

\begin{ques}
Extend these results to fourfold Toda brackets.
\end{ques}

\begin{ques}
Find examples of the following phenomena in the stable homotopy groups of spheres:
\begin{enumerate}
\item
A fourfold bracket
that is not defined because its coindeterminacy does not contain zero.
\item
A fourfold bracket
that is defined because
the coindeterminacy contains zero, even though
both threefold brackets have non-zero indeterminacy.
\end{enumerate}
\end{ques}

\begin{ques}
Reinterpret coindeterminacy in terms of the existence or non-existence
of certain 5-cell complexes.
\end{ques}

\begin{ques}
Extend these ideas to higher order Toda brackets.
\end{ques}

\bibliographystyle{amsalpha}

\begin{thebibliography}{S}

\bibitem{BJM}
M.\ G.\ Barratt, J.\ D.\ S.\ Jones, and M.\ E.\ Mahowald, 
\emph{Relations amongst Toda brackets and the Kervaire invariant in dimension 62},
J.\ London Math.\ Soc.\ \textbf{30} (1984) 533-Ð550. 

\bibitem{BMT}
M.\ G.\ Barratt, M.\ E.\ Mahowald, and M.\ C.\ Tangora, 
\emph{Some differentials in the Adams spectral sequence II},
Topology \textbf{9} (1970) 309-Ð316.

\bibitem{I1}
D.\ C.\ Isaksen,
\emph{The cohomology of the motivic Steenrod algebra over $\Spec \C$},
preprint, 2013.

\bibitem{I2}
D.\ C.\ Isaksen,
\emph{Motivic stable stems}, in preparation.

\bibitem{K}
D.\ Kraines,
\emph{Massey higher products}
Trans.\ Amer.\ Math.\ Soc.\ \textbf{124} (1966) 431-Ð449. 

\bibitem{M}
J.\ P.\ May,
\emph{Matric Massey products}
J.\ Algebra \textbf{12} (1969) 533-Ð568. 

\bibitem{MT}
M.\ Mahowald, and M.\ Tangora, 
\emph{Some differentials in the Adams spectral sequence},
Topology \textbf{6} (1967) 349-Ð369.
 
\end{thebibliography}

\end{document}